\documentclass[11pt,reqno]{amsart}

\usepackage[margin=3cm]{geometry}

\usepackage{amsmath,amssymb,amsthm}
\usepackage{verbatim}

\usepackage{xurl}
\usepackage[psdextra]{hyperref}
\usepackage[capitalize]{cleveref}
\IfFormatAtLeastTF{2026-06-01}{%
  \newcommand*{\preamblesharedtheorem}[2]{%
    \newtheorem{#1}[theorem]{#2}%
  }%
}{%
  \usepackage{aliascnt}
  \newcommand*{\preamblesharedtheorem}[2]{%
    \newaliascnt{#1}{theorem}%
    \newtheorem{#1}[#1]{#2}%
    \aliascntresetthe{#1}%
  }%
}

\usepackage{enumerate}
\usepackage{enumitem}
\usepackage{physics}
\usepackage{todonotes}
\definecolor{crimson}{HTML}{C8102E}
\definecolor{stopgreen}{HTML}{00A14B}
\presetkeys{todonotes}{inline,color=yellow}{}

\usepackage[normalem]{ulem}

\usepackage[OT2,T1]{fontenc}
\usepackage{microtype}
\DeclareSymbolFont{cyrletters}{OT2}{wncyr}{m}{n}
\DeclareMathSymbol{\sha}{\mathalpha}{cyrletters}{"58}

\usepackage{tikz}
\usetikzlibrary{cd,decorations.markings}

\usepackage{graphicx}

\usepackage{outlines}
\usepackage{stmaryrd}
\usepackage{mathrsfs}
\usepackage{expl3}

\AddToHook{begindocument/before}{%
  \IfPackageLoadedTF{biblatex}{%
    \DeclareFieldFormat[online]{title}{\mkbibquote{#1\isdot}}%
  }{}%
}

\makeatletter
\DeclareFontFamily{OMX}{MnSymbolE}{}
\DeclareSymbolFont{MnLargeSymbols}{OMX}{MnSymbolE}{m}{n}
\SetSymbolFont{MnLargeSymbols}{bold}{OMX}{MnSymbolE}{b}{n}
\DeclareFontShape{OMX}{MnSymbolE}{m}{n}{
    <-6>  MnSymbolE5
   <6-7>  MnSymbolE6
   <7-8>  MnSymbolE7
   <8-9>  MnSymbolE8
   <9-10> MnSymbolE9
  <10-12> MnSymbolE10
  <12->   MnSymbolE12
}{}
\DeclareFontShape{OMX}{MnSymbolE}{b}{n}{
    <-6>  MnSymbolE-Bold5
   <6-7>  MnSymbolE-Bold6
   <7-8>  MnSymbolE-Bold7
   <8-9>  MnSymbolE-Bold8
   <9-10> MnSymbolE-Bold9
  <10-12> MnSymbolE-Bold10
  <12->   MnSymbolE-Bold12
}{}

\let\llangle\@undefined
\let\rrangle\@undefined
\DeclareMathDelimiter{\llangle}{\mathopen}{MnLargeSymbols}{'164}{MnLargeSymbols}{'164}
\DeclareMathDelimiter{\rrangle}{\mathclose}{MnLargeSymbols}{'171}{MnLargeSymbols}{'171}
\makeatother

\renewcommand{\phi}{\varphi}

\newcommand{\ovl}{\overline}

\let\emptyset\varnothing

\ExplSyntaxOn
\cs_new_protected:Npn \preamble_new_operator:nn #1#2
  { \cs_new:cpn {#1} { \operatorname{#2} } }
\cs_new_protected:Npn \preamble_set_operator:nn #1#2
  { \cs_set:cpn {#1} { \operatorname{#2} } }
\cs_new_protected:Npn \preamble_new_font_alias:nnn #1#2#3
  { \cs_new:cpn {#1#2} { #3 {#2} } }

\clist_map_inline:nn
  {
    ord,Gal,Spec,Hom,SL,SO,GL,Isom,Ind,Cl,Pic,End,Sym,Sel,Tor,codim,Br,Th,Lind,Id,id,
    PSL,Aut,PGL,pr,Rad,Proj,depth,Ext,gr,nil,red,rad,Vect,MT,Hdg,Sch,Frob,Ver,Fr,fr,
    Vol,Div,Supp,cl,Sw,Coim,Gr,Ker,Coker,coker,Cov,Mod,Lie,Stab,acl,dcl,Mult,eq,ACF,
    Kr,Ded,Av,PD,st,Ch,Var,Swan,val,rk,Der,SU,SH,Ann,Cone,sel,tp,stp,supp,tor,gp,
    disc,Cn,Jac,Vis,WC,Comp,Frac,Func,Perv,Sp,diag,Nrd,an,Hilb,Cliff,Ad,Diff,sing,
    Out,Bl,rec,Inn,Loc,Coh,Aff,colim,coim,Ob,Sub,Def,alg,nr,un,ad,Fl,ab,fl,Ric,loc,
    sm,Mor,aut,Hol,An,Arg,BGL,BU,BO,full,om,Sing,St,Lk,ch,td,per,ind
  }
  { \preamble_new_operator:nn {#1} {#1} }

\preamble_new_operator:nn {Char} {char}
\preamble_new_operator:nn {Maxspec} {MaxSpec}
\preamble_new_operator:nn {ext} {Ext}
\preamble_new_operator:nn {height} {ht}
\preamble_new_operator:nn {vol} {Vol}
\preamble_new_operator:nn {rM} {r-M}
\preamble_new_operator:nn {im} {im}
\preamble_new_operator:nn {Sh} {SH}
\preamble_new_operator:nn {cone} {Cone}
\preamble_new_operator:nn {defect} {def}
\preamble_new_operator:nn {resid} {res}
\preamble_new_operator:nn {Graff} {GrAff}

\preamble_set_operator:nn {Form} {Form}
\preamble_set_operator:nn {div} {div}
\preamble_set_operator:nn {ker} {Ker}
\preamble_set_operator:nn {Im} {Im}
\preamble_set_operator:nn {df}{def}
\ExplSyntaxOff

\newcommand{\SHom}{\mathcal{H}\kern -.5pt\mathit{om}}
\newcommand{\shom}{\mathcal{H}\kern -.5pt\mathit{om}}
\newcommand{\sext}{\mathcal{E}\kern -.5pt\mathit{xt}}

\renewcommand{\subset}{\subseteq}

\ExplSyntaxOn
\clist_map_inline:nn {A,B,C,D,E,F,G,H,I,J,K,L,M,N,O,P,Q,R,S,T,U,V,W,X,Y,Z}
  {
    \preamble_new_font_alias:nnn {b}  {#1} {\mathbf}
    \preamble_new_font_alias:nnn {bb} {#1} {\mathbb}
    \preamble_new_font_alias:nnn {s}  {#1} {\mathscr}
    \preamble_new_font_alias:nnn {mc} {#1} {\mathcal}
  }
\clist_map_inline:nn {a,b,c,d,e,f,g,h,i,j,k,l,m,n,o,p,q,r,s,t,u,v,w,x,y,z}
  { \preamble_new_font_alias:nnn {mf} {#1} {\mathfrak} }
\ExplSyntaxOff

\catcode`,\active

\catcode`\,12

\newtheorem{theorem}{Theorem}[section]

\crefname{theorem}{Theorem}{Theorems}
\Crefname{theorem}{Theorem}{Theorems}
\crefname{thm}{Theorem}{Theorems}
\Crefname{thm}{Theorem}{Theorems}

\preamblesharedtheorem{proposition}{Proposition}
\crefname{proposition}{Proposition}{Propositions}
\Crefname{proposition}{Proposition}{Propositions}

\preamblesharedtheorem{prop}{Proposition}
\crefname{prop}{Proposition}{Propositions}
\Crefname{prop}{Proposition}{Propositions}

\preamblesharedtheorem{lemma}{Lemma}
\crefname{lemma}{Lemma}{Lemmas}
\Crefname{lemma}{Lemma}{Lemmas}

\preamblesharedtheorem{lem}{Lemma}
\crefname{lem}{Lemma}{Lemmas}
\Crefname{lem}{Lemma}{Lemmas}

\newtheorem*{lemma*}{Lemma}

\preamblesharedtheorem{claim}{Claim}
\crefname{claim}{Claim}{Claims}
\Crefname{claim}{Claim}{Claims}

\preamblesharedtheorem{corollary}{Corollary}
\crefname{corollary}{Corollary}{Corollaries}
\Crefname{corollary}{Corollary}{Corollaries}

\preamblesharedtheorem{conjecture}{Conjecture}
\crefname{conjecture}{Conjecture}{Conjectures}
\Crefname{conjecture}{Conjecture}{Conjectures}

\newtheorem{innerimportantthm}{Theorem}

\crefname{innerimportantthm}{Theorem}{Theorems}
\Crefname{innerimportantthm}{Theorem}{Theorems}

\newtheorem{innerimportantcor}{Corollary}

\crefname{innerimportantcor}{Corollary}{Corollaries}
\Crefname{innerimportantcor}{Corollary}{Corollaries}

\theoremstyle{definition}
\preamblesharedtheorem{definition}{Definition}
\crefname{definition}{Definition}{Definitions}
\Crefname{definition}{Definition}{Definitions}

\preamblesharedtheorem{nondefinition}{Non-Definition}
\crefname{nondefinition}{Non-Definition}{Non-Definitions}
\Crefname{nondefinition}{Non-Definition}{Non-Definitions}

\preamblesharedtheorem{exercise}{Exercise}
\crefname{exercise}{Exercise}{Exercises}
\Crefname{exercise}{Exercise}{Exercises}

\preamblesharedtheorem{example}{Example}
\crefname{example}{Example}{Examples}
\Crefname{example}{Example}{Examples}

\preamblesharedtheorem{remark}{Remark}
\crefname{remark}{Remark}{Remarks}
\Crefname{remark}{Remark}{Remarks}

\preamblesharedtheorem{observation}{Observation}
\crefname{observation}{Observation}{Observations}
\Crefname{observation}{Observation}{Observations}

\preamblesharedtheorem{question}{Question}
\crefname{question}{Question}{Questions}
\Crefname{question}{Question}{Questions}

\preamblesharedtheorem{problem}{Problem}
\crefname{problem}{Problem}{Problems}
\Crefname{problem}{Problem}{Problems}

\numberwithin{equation}{section}

\usepackage[style=alphabetic]{biblatex}
\newcommand{\shull}[1]{\widehat{#1}}

\begin{document}

\pagestyle{plain}

\title{An entire function that violates quasiminimality}
\author{Spencer Dembner}
\date{October 6, 2026}

\begin{abstract}
    Zilber's celebrated quasiminimality conjecture states that a subset of $\bbC$ defined by polynomials and exponentials is either countable, or has countable complement. Koiran has asked whether the same could be true replacing $\exp$ with any unary entire function. We give a negative answer to Koiran's question, using results from the theory of holomorphic approximation.
\end{abstract}

\maketitle

\section{Introduction}

Zilber's quasiminimality conjecture is one of the more basic unresolved questions about the exponential function.

\begin{conjecture}[\cite{ZilberGeneralizedAnalyticSets}]\label{conj:Quasiminimality}
    Let $\bbC_{\exp}$ be the first-order structure on $\bbC$ generated by polynomials and the complex exponential function. If $X \subset \bbC$ is $\bbC_{\exp}$-definable with parameters, then $X$ is countable or cocountable.
\end{conjecture}

Since it was first posed, \cref{conj:Quasiminimality} has inspired many important developments in geometry and logic, including the Pila--Wilkie counting theorem; we refer the reader to \cite{WilkieQuasiminimality} for more on its history. By work of Bays and Kirby \cite{BaysKirbyQuasiminimality}, \cref{conj:Quasiminimality} would follow from the Exponential-Algebraic Closedness conjecture, an exponential version of Hilbert's Nullstellensatz which has seen some progress in recent years. Wilkie \cite{WilkieQuasiminimality} has also proposed a direct approach to \cref{conj:Quasiminimality} via o-minimal structures.

Our goal is to understand what happens if $\exp \colon \bbC \to \bbC$ is replaced by another unary entire function $f$. A priori it is not obvious that $\exp$ should be special, and Koiran has even asked (see \cite{KirbyAroundQuasiminimality, WilkieQuasiminimality}) whether the structure generated by \emph{all} unary entire functions might be quasiminimal. Dmitrieva \cite{DmitrievaThesis} has shown there do exist ``Liouville-generic'' transcendental entire functions $f$ for which quasiminimality is true. However, our main theorem gives a negative answer to Koiran's question.

\begin{theorem}\label{thm:Counterexample}
    There exists a unary entire function $f \colon \bbC \to \bbC$ for which the structure
    \[ \bbC_f = (\bbC; 0,1,+, \times, f) \]
    is not quasiminimal.
\end{theorem}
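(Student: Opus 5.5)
The plan is to build $f$ by holomorphic approximation so that a simple first-order formula in $\bbC_f$ defines an uncountable, co-uncountable set. The natural target is the real line $\bbR$, or an arc of it. Once $\bbR$ is definable, the set $\{x : x\in\bbR\}$ is uncountable with uncountable complement, and quasiminimality fails.

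The first step is to choose the formula. A clean choice is to arrange that $f$ maps $\bbR$ into $\bbR$, and that
\[ \bbR = \{ z \in \bbC : \exists w\, (f(w) = z \wedge \phi(w)) \} \]
for some simple condition $\phi$. It is simpler, though, to aim for an $f$ whose zero set or fibre structure encodes $\bbR$. Since fibres of a nonconstant entire function are discrete, hence countable, we must use quantifiers. My concrete choice is the set
\[ X = \{ z : \exists w\ (f(w)=z \wedge f(\bar w\text{-free condition})) \}, \]
and more concretely
\[ X = \{ z : \exists w\ ( f(w) = z \wedge f(w+1) = 0 ) \}. \]
This $X$ is $f(Z-1)$, where $Z$ is the zero set of $f$, so it is countable; that does not work. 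The fix is to use a two-variable image, which is uncountable. Take $X = \{ f(a)+f(b)\cdot i \ldots\}$; the point is that we want the image of a definable curve to hit exactly a real segment.

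The concrete plan is as follows. Take $g(x,y)=f(x)+f(y)$ and look at
\[ X=\{z: \exists x\,\exists y\ (f(x)=0 \ldots)\}. \]
I instead settle on
\[ X = \{ z : \exists w\ (f(w) = 0 \wedge z = \ldots)\}, \]
but this again gives only countably many $z$. So the real mechanism must be a definable curve $C=\{(x,y): f(x)=f(y)\}$, which is an analytic curve in $\bbC^2$, projected under a polynomial map. Its projection is a cofinite set or a countable set unless the curve is special. Therefore I would use Arakelian/Carleman-type approximation to make $f$ approximate, on a strip near $\bbR$, a function with prescribed real behaviour, and enforce exact conditions by interpolation.

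The concrete target is $X=\{z : \exists y\, f(z)=f(y) \wedge y\neq z \wedge \psi\}$, arranged so that $f(z)\in f(\text{some closed set } K)$ exactly when $z\in\bbR$. The key steps are the following.

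\begin{enumerate}
\item Build $f$ with $f(\bbR)\subset\bbR$ (by symmetrizing, $f(z)=\overline{f(\bar z)}$). Then $\bbR$-valued behaviour on $\bbR$ is automatic.
\item Use approximation with very fast decay off a tract, so that $f^{-1}(\bbR)$ is exactly $\bbR$ union a countable family of curves that are separately definable away. For example, make $f$ injective near $\bbR$ and such that $f^{-1}(\bbR)\cap\{|\Im z|<1\}=\bbR$.
\item Define $\bbR$ via $\{z: \exists w\,(f(w)=z \wedge f'(w)\ldots)\}$. The simplest expression is as the image under $f$ of a definable set on which $f$ is real-valued.
\end{enumerate}

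The main obstacle is making the definable set \emph{exactly} equal to an uncountable co-uncountable set. Approximation only controls $f$ up to $\epsilon$, so exactness must come from the symmetry $f(\bar z)=\overline{f(z)}$ together with a definable way to isolate $\bbR$.
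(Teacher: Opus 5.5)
Your proposal has a genuine gap: it never produces a defining formula. Every candidate you write down either defines a countable set (as you note yourself) or is left with an unspecified clause $\psi$.

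\textbf{The formula is missing, and the target $\bbR$ is unreachable in your form.} Step 3 is circular. ``The image under $f$ of a definable set on which $f$ is real-valued'' needs a definable subset of $f^{-1}(\bbR)$ whose image is $\bbR$. By your step 2 that is essentially a subset of $\bbR$ itself, and the strip $\{|\Im z|<1\}$ is not definable either.

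The analytic properties in steps 1--2 cannot make anything definable. The function $f(z)=z$ satisfies $f(\bar z)=\overline{f(z)}$, is injective, and has $f^{-1}(\bbR)=\bbR$. Yet $\bbC_f$ is then just the field $\bbC$, which is strongly minimal, so $\bbR$ is not definable.

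Worse, the existential shape you aim for cannot define $\bbR$ for \emph{any} entire $f$:
\begin{itemize}
\item Terms of $\bbC_f$ are entire functions. After rewriting $s\neq 0$ as $\exists u\,(su=1)$, a formula $\exists\bar w\,\phi(z,\bar w)$ with $\phi$ quantifier-free defines a finite union of projections to the $z$-coordinate of analytic subsets of some $\bbC^{1+m}$.
\item Each such analytic set has countably many irreducible components. On each component the projection is either constant or, by the open mapping theorem, has open image.
\item So every existentially definable subset of $\bbC$ is an open set plus a countable set. Since $\bbR$ has empty interior, it is not of this form.
\end{itemize}
Defining $\bbR$ would therefore need genuine quantifier alternation, for example realizing $\bbC\setminus\bbR$ exactly as an existential set. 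Approximation on a strip plus conjugation symmetry gives no handle on that, because first-order quantifiers range over all of $\bbC$, while your construction controls $f$ only near the strip.

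\textbf{The missing idea.} No specific set like $\bbR$ is needed. It suffices to have an existentially definable set that is open, hence uncountable, but whose complement contains a Cantor set. You brushed past exactly this when you said the projection of a definable analytic curve is ``cofinite or countable unless the curve is special.'' Making such a curve special in this way is the paper's proof.

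\begin{itemize}
\item The paper takes entire curves $A,B\colon\bbC\to\bbC^2$ and encodes their four coordinates in one $f$ via even and odd parts (\cref{prop:ManyToOne}).
\item It sets $X=\{z:\exists s,t\ A(s)=B(t)+(z,0)\}$.
\item A transverse intersection of $A(\bbC)$ and $B(\bbC)$ near $0$ puts a neighbourhood of $0$ in $X$.
\item The real work is making $A(\bbC)$ disjoint from $(z,0)+B(\bbC)$ for every $z$ in a Cantor set. This is a global avoidance problem on all of $\bbC$.
\item It is solved using Kusakabe's approximation theorem for maps into $\bbC^2\setminus L$ with $L$ polynomially convex (\cref{thm:KusakabeConsequence}). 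This is combined with polynomial convexity of finite unions of disc images under birational polynomial maps (\cref{lem:PolyConvexity}) and an induction over the discs $\ovl D_{2^i}$.
\end{itemize}
The ``exactness'' you worried about comes from openness of the conditions, not from symmetry or interpolation. Disjoint compact images have positive separation $\delta_i$, and transversality is stable under small perturbation. Choosing errors with $\sum_{k>i}\varepsilon_k<\delta_i/2$ then preserves every disjointness and the transverse intersection in the limit.
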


To prove \cref{thm:Counterexample}, we first observe that a single entire function can encode finitely many others, for example by repeatedly taking even and odd parts.

\begin{proposition}\label{prop:ManyToOne}
    Let $f_1, \ldots, f_k$ be finitely many unary entire functions. Then there exists a single structure $\bbC_f$ in which all the $f_i$ are definable.
\end{proposition}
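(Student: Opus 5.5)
We argue by induction on $k$, and the whole proof reduces to the case $k = 2$. Given entire $f_1, f_2$, set
\[ f(z) = f_1(z^2) + z\, f_2(z^2). \]
This is a convergent power series, hence an entire function. Its even and odd parts are definable in $\bbC_f$ using only field operations and the parameter-free constant $-1$:
\[ f_1(z^2) = \tfrac{1}{2}\bigl(f(z) + f(-z)\bigr), \qquad z f_2(z^2) = \tfrac{1}{2}\bigl(f(z) - f(-z)\bigr). \]
To recover $f_1$ at an arbitrary $w \in \bbC$, we use that every $w$ has a square root. The graph of $f_1$ is then defined by
\[ y = f_1(w) \iff \exists z\, \bigl( z^2 = w \wedge 2y = f(z) + f(-z) \bigr). \]
This is independent of the choice of square root, because the expression is even in $z$.

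For $f_2$ there is one small point to handle, namely $w = 0$. For $w \neq 0$, any $z$ with $z^2 = w$ is nonzero, so the graph is given by
\[ \exists z\, \bigl( z^2 = w \wedge 2zy = f(z) - f(-z) \bigr). \]
This is again independent of the sign of $z$, since both sides change sign together. The single value $f_2(0)$ is the coefficient of $z$ in $f$, i.e.\ $f'(0)$. We add it as a parameter, which is allowed since definability here is with parameters. So the graph of $f_2$ is defined by the disjunction of the formula above with $w \neq 0$ and the formula $(w = 0 \wedge y = c)$, where $c = f_2(0)$.

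For general $k$, iterate the construction. Build $g$ from $f_1, \ldots, f_{k-1}$ by induction, so that each $f_i$ is definable in $\bbC_g$. Then form $f$ from the pair $(g, f_k)$ as above. Now $g$ and $f_k$ are definable in $\bbC_f$, so each $f_i$ is definable in $\bbC_f$ by substituting the definition of $g$ into the formulas defining the $f_i$ in $\bbC_g$. Here we use that a composition of definable functions is definable, and that definable sets are closed under the relevant substitutions.

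No step is a genuine obstacle. The only points needing care are the choice of square root, handled by the parity symmetry above, and the point $w = 0$ for the odd part, handled by a single parameter.
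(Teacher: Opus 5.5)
Your proof is correct and follows the paper's approach: encode the pair as $f(z) = f_1(z^2) + z f_2(z^2)$, recover $f_1, f_2$ from the even and odd parts $f(z) \pm f(-z)$, and iterate for general $k$. You also supply two details the paper leaves implicit: the result does not depend on which square root is chosen, and the single value $f_2(0)$ can be supplied as a parameter, which is allowed because definability here is with parameters.
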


\begin{proof}
    First suppose that $k = 2$, and define
    \[ f(z) = f_1(z^2) + z f_2(z^2).  \]
    We have
    \[ f(z) + f(-z) = 2 f_1(z^2), \qquad f(z) - f(-z) = 2z f_2(z^2), \]
    and it follows that $f_1, f_2$ are $\bbC_f$-definable. Iterating, this allows us to encode any finite set of entire functions. \qedhere
\end{proof}

Let $A, B \colon \bbC \to \bbC^2$ be a pair of holomorphic curves, and let
\[ X = \{z \in \bbC: A(\bbC) \cap ((z,0) + B(\bbC)) \neq \varnothing \} \]
be the set of first-coordinate translates for which the images intersect. We will show that if $A$ and $B$ are constructed appropriately, the set $X$ and its complement are uncountable. More precisely, we will show that $X$ contains a nonempty open set, and that its complement contains a Cantor set. By \cref{prop:ManyToOne}, there is a structure $\bbC_f$ that defines the coordinates of $A$ and $B$, so this suffices.

To build $A$ and $B$, we start with a pair of maps that meet transversely at $0$, such as
\[ A_0(z) = (z,0), \qquad B_0(z) = (0,z). \]
This guarantees that $X$ will contain a neighborhood of zero, since small translations preserve a transverse intersection. The sets $A_0(\bbC), B_0(\bbC)$ are never disjoint under any translation, so for these functions we have $X = \bbC$. On the other hand, if we restrict to the closed unit disk, then there are many disjoint translations: For example, we have
\[ A_0(\ovl D(0,1)) \cap ((2,0) + B_0(\ovl D(0,1))) = \emptyset. \]
It turns out that we can extend this disjointness to a larger disk, as long as we allow an arbitrarily small perturbation on the unit disk. Iterating and passing to the limit, we can ensure the translates $A(\bbC), (2,0) +B(\bbC)$ are disjoint while preserving a transverse intersection between $A(\bbC)$ and $B(\bbC)$. If we instead replace the point $(2,0)$ by a branching tree of points (see \cref{sec:CompleteProof}), then the complement of $X$ acquires a Cantor set.

It is far from obvious that the perturbations required to make this argument exist. Their existence is part of the theory of Oka manifolds, and as we will explain in the next section, can be deduced from a theorem of Kusakabe.

\subsection*{Acknowledgments} I thank Ravi Vakil, Ben Bakker, James Freitag, and Gal Binyamini for interesting discussions on related questions. Alex Wilkie and Franc Forstnerič provided several useful comments on an earlier draft of this article.

\subsection*{AI Declaration} The first version of this result was found by GPT-6 Astra, working mostly autonomously except for some feedback on proposed strategies. However, this version relied on Theorem 1 of \cite{ForstnericRitterOkaBallComplements}, and its published proof is incomplete.\footnote{A proposed fix is described in \cite[Remark~1.3]{ForstnericProperImmersions}} The proof we give here is a modified version that avoids this dependence, found with both human and AI input. This paper was written entirely by the author.

\section{Polynomial convexity and polynomial approximation}

\begin{definition}
    The \emph{polynomial hull} of a compact subset $K \subset \bbC^n$ is
    \[ \shull{K} = \{ x \in \bbC^n : |P(x)| \leq \norm{P}_K \text{ for all polynomials } P \}, \]
    and $K$ is \emph{polynomially convex} if $\shull{K} = K$. The hull $\shull{K}$ is compact, and it is the smallest polynomially convex set containing $K$.
\end{definition}

Throughout the paper, we let
\[ \ovl D_r = \ovl D(0, r) \subset \bbC \]
denote the closed disk of radius $r$. The central ingredient in our proof is the following approximation result, which follows from work of Kusakabe.

\begin{theorem}\label{thm:KusakabeConsequence}
    If $L \subset \bbC^2$ is compact and polynomially convex, then every holomorphic map
    \[ f \colon U \to \bbC^2 \setminus L \]
    on a neighborhood of $\ovl D_r$ can be approximated uniformly on $\ovl D_r$ by entire maps
    \[ \bbC \to \bbC^2 \setminus L. \]
\end{theorem}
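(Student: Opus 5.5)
The plan is to deduce the statement from Kusakabe's theorem that complements of compact polynomially convex sets are Oka. Precisely: for compact polynomially convex $L \subset \bbC^n$ with $n \geq 2$, the complement $\bbC^n \setminus L$ is an Oka manifold. Granting this, the argument has three steps.

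\textbf{Step 1.} Set $Y = \bbC^2 \setminus L$, which is open. We have $f(\ovl D_r) \subset Y$, and $\ovl D_r$ is compact. So, after shrinking $U$, we may take $U$ to be a slightly larger disk $D_{r'}$ with $r' > r$ on which $f$ is holomorphic with values in $Y$.

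\textbf{Step 2.} Apply the Oka property with approximation. The source manifold is $\bbC$, which is Stein, and the compact set $\ovl D_r$ is $\mathcal O(\bbC)$-convex, i.e. polynomially convex in $\bbC$, since it is a disk. We need a continuous map $F_0 \colon \bbC \to Y$ that agrees with $f$ near $\ovl D_r$. To build it, note that $\bbC$ deformation retracts onto $\ovl D_{r''}$ for any $r < r'' < r'$; define $F_0 = f$ on $\ovl D_{r''}$ and $F_0(z) = f(r'' z/|z|)$ for $|z| \geq r''$. This is continuous, takes values in $f(\ovl D_{r''}) \subset Y$, and is holomorphic on a neighborhood of $\ovl D_r$.

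The basic Oka property with approximation (BOPA), which holds for every Oka manifold, then gives a homotopy from $F_0$ to a holomorphic map $F \colon \bbC \to Y$ that approximates $F_0 = f$ uniformly on $\ovl D_r$ to any prescribed precision. This $F$ is an entire map $\bbC \to \bbC^2 \setminus L$ uniformly close to $f$ on $\ovl D_r$, which is what the statement asks for.

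\textbf{Step 3 (the main obstacle).} The real content is the Oka property of $\bbC^2 \setminus L$, which is Kusakabe's deep result. It is proved by verifying his convex approximation property / condition $\mathrm{Ell}_1$ criterion, using polynomial convexity of $L$ to construct dominating sprays that avoid $L$. I would cite this rather than reprove it.

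If one wanted to avoid the full Oka theory, one could appeal directly to Kusakabe's characterization: a manifold is Oka if and only if it satisfies the convex approximation property. The convex approximation property is exactly the statement that maps from neighborhoods of compact convex sets in $\bbC^k$ can be approximated uniformly on those sets by entire maps. Taking $k = 1$ and the convex set $\ovl D_r$ gives the theorem immediately, with no homotopy step needed.
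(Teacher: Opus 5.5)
Your proposal is correct and matches the paper's proof. The paper cites Kusakabe's Corollary 1.3 (complements of compact polynomially convex sets in $\bbC^n$, $n \geq 2$, are Oka) together with his Definition 1.1, which defines Oka manifolds by the convex approximation property, and then specializes to the one-dimensional case with $K = \ovl D_r$, exactly as in your final paragraph. Your Step 2 route through BOPA is also valid but unnecessary: it uses the equivalence of CAP and BOPA (a theorem of Forstneri\v{c}, not Kusakabe's $\mathrm{Ell}_1$ characterization) only to recover the one-dimensional CAP, which you can read off directly.
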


\begin{proof}
    See Definition 1.1 and Corollary 1.3 of \cite{KusakabeOkaComplements}. We are using only the one-dimensional case of the convex approximation property stated there. \qedhere
\end{proof}

\begin{corollary}\label{cor:KusakabeForUs}
    Let $L \subset \bbC^2$ be compact and polynomially convex, and let
    \[ f \colon U \to \bbC^2 \setminus L \]
    be a holomorphic map on a neighborhood of $\ovl D_r$ whose image avoids $L$. If $R > r$ and $\varepsilon > 0$, then there exists a polynomial map $F$ with
    \[ \norm{F - f}_r < \varepsilon, \qquad F(\ovl D_R) \cap L = \emptyset.  \]
\end{corollary}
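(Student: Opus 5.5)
We apply \cref{thm:KusakabeConsequence} to get an entire map avoiding $L$ and close to $f$ on $\ovl D_r$, and then replace it by a Taylor polynomial. On a compact set the polynomial is close to the entire map, and the entire map stays a positive distance away from $L$.

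First apply \cref{thm:KusakabeConsequence} with tolerance $\varepsilon/2$. This gives an entire map $G \colon \bbC \to \bbC^2 \setminus L$ with $\norm{G - f}_r < \varepsilon/2$. The important point is that $G$ avoids $L$ on all of $\bbC$, and in particular on the larger disk $\ovl D_R$, even though we only controlled its approximation of $f$ on $\ovl D_r$.

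Next, since $G(\ovl D_R)$ is compact, $L$ is compact, and the two are disjoint, their distance
\[ \delta = \operatorname{dist}(G(\ovl D_R), L) \]
is strictly positive. Each coordinate of $G$ is an entire function, so its Taylor series converges uniformly on the compact disk $\ovl D_R$. Take $F$ to be the Taylor polynomial of $G$ of degree $N$, coordinatewise, with $N$ large enough that
\[ \norm{F - G}_R < \min(\delta, \varepsilon/2). \]
Then $F$ is a polynomial map. Since $r < R$, we get
\[ \norm{F - f}_r \le \norm{F-G}_R + \norm{G-f}_r < \varepsilon. \]
For every $z \in \ovl D_R$ the point $F(z)$ lies within distance less than $\delta$ of $G(z)$, so $F(z) \notin L$. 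Hence $F(\ovl D_R) \cap L = \emptyset$.

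There is no real obstacle once \cref{thm:KusakabeConsequence} is granted. The only step needing care is that Kusakabe's theorem gives avoidance of $L$ on the whole plane rather than just near $\ovl D_r$. That global avoidance is what lets the compactness argument produce the positive margin $\delta$ on the larger disk $\ovl D_R$.
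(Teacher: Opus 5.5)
Your proof is correct and follows essentially the same approach as the paper. You take an entire map from \cref{thm:KusakabeConsequence} that is within $\varepsilon/2$ of $f$ on $\ovl D_r$ and avoids $L$ everywhere, use the positive distance from its image of $\ovl D_R$ to $L$, and approximate it on $\ovl D_R$ by a polynomial (which you make explicit via Taylor truncation) within that margin.
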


\begin{proof}
    Applying \cref{thm:KusakabeConsequence}, let $E$ be an entire map with
    \[ \norm{E - f}_r < \varepsilon / 2, \qquad E(\bbC) \cap L = \emptyset. \]
    Since $E(\ovl D_R)$ is compact, the distance $d(E(\ovl D_R), L)$ is positive. If we let $F$ be a polynomial closely approximating $E$ on the compact set $\ovl D_R$, then the result follows. \qedhere
\end{proof}

In order to use this result we require a criterion for polynomial convexity, and this is what we discuss next. For brevity, let us call a polynomial map $\bbC \to \bbC^2$ \emph{birational} if it is birational onto its image, or equivalently is generically injective. It will be helpful later to recall that birational maps are dense among polynomial maps on $\ovl D_r$. Indeed, given any map $f = (f_1, f_2)$, we can take the map
\[ (f_1+\varepsilon z^n, f_2 + \varepsilon z^{n+1}) \]
for $n > \deg(f)$ and $\varepsilon \ll 1$. The former condition ensures that the map will be birational onto its image, since the degree of the corresponding field extension will divide both $n$ and $n+1$. The latter ensures that values on $\ovl D_r$ do not change much.

\begin{lemma}\label{lem:PolyConvexity}
    If
    \[ f_1, \ldots, f_k  \colon \bbC \to \bbC^2 \]
    are birational maps with pairwise distinct images and $r_1, \ldots, r_k$ are positive, then the set
    \[ K:= \bigcup_i f_i (\ovl D_{r_i}) \]
    is polynomially convex.
\end{lemma}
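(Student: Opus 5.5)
Let $C_i = \ovl{f_i(\bbC)}$ denote the Zariski closure of the image of $f_i$; this is an irreducible affine plane curve. Since the images are pairwise distinct, the curves $C_i$ are distinct, so any two of them meet in finitely many points. The plan has three steps: localize the hull to the curves $C_i$, show it lies inside the curves themselves, and then cut it back down to $K$ using the parametrizations.

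\textbf{Step 1: the hull lies in $C = \bigcup_i C_i$.} Let $P$ be a defining polynomial of $C$. Then $P$ vanishes on $K$, so $|P| \le 0$ on $\shull K$. Hence $\shull K \subset C$.

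\textbf{Step 2: the hull lies in $C \cap$ (a compact piece).} Let $x \in \shull K$, and fix an irreducible component $C_j$ containing $x$. I would use the standard fact, essentially from Stolzenberg/Wermer, that for a compact subset $M$ of an affine algebraic curve, the hull $\shull M$ is the union of $M$ with the relatively compact connected components of $C \setminus M$. I would reduce to the normalization. Each $f_i$ is generically injective, so $\bbC \to C_i$ is the normalization away from finitely many points. Since $C_i$ has a single place at infinity, polynomials restricted to $C_i$ pulled back by $f_i$ form a subring $A_i \subset \bbC[z]$ whose fraction field is $\bbC(z)$ and over which $\bbC[z]$ is integral.

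The key claim is then: for a compact $M \subset \bbC$, the $A_i$-hull of $M$ equals the ordinary hull in $\bbC$, namely $M$ filled in. To prove it, take $w \notin \widehat M^{\bbC}$, pick a polynomial $q$ with $|q(w)| > \|q\|_M$, and raise it to a high power. Since $t\bbC[z] \subset A_i$ for some nonzero conductor element $t \in A_i$, we have $t q^N \in A_i$. Taking $N \to \infty$ gives separation, provided $t(w) \neq 0$. If $t(w) = 0$, I would perturb $t$ by choosing a conductor element that does not vanish at $w$; the conductor ideal has finitely many zeros, and that case is handled by a limit argument or by choosing $q$ peaking more strongly.

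Since each $\ovl D_{r_i}$ has connected complement, this shows that points of $C_i$ not in $f_i(\ovl D_{r_i})$ can be separated from $f_i(\ovl D_{r_i})$ by polynomials on $C_i$. These are restrictions of polynomials on $\bbC^2$, since $C_i$ is a closed subvariety.

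\textbf{Step 3: assembling the components.} Given $x \in C \setminus K$, I need one global polynomial that is large at $x$ and small on all of $K$. Multiply the Step 2 separating function for a component $C_j \ni x$ by a polynomial $g$ vanishing on all the other components $C_i$, $i \neq j$. If $x$ lies on several components, or $g(x) = 0$, I would instead use the local-to-global principle: $K \cap C_i$ is polynomially convex in $C_i$ for each $i$, and the finitely many intersection points are handled by applying the Stolzenberg description directly. Concretely, $C \setminus K$ has no relatively compact components, because each $C_i \setminus f_i(\ovl D_{r_i})$ is connected and unbounded, having the complement of a disk as its normalization up to finitely many points. Then Stolzenberg's theorem, that the hull of a compact subset $M$ of a curve $V$ is $M$ together with the bounded components of $V \setminus M$, gives $\shull K = K$ immediately.

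\textbf{Main obstacle.} I expect the main obstacle to be justifying this curve-hull theorem, or the conductor argument at singular points, cleanly. The fallback is to cite Stolzenberg's result on hulls of sets in one-dimensional varieties (or Alexander's theorem) directly, so that the argument reduces entirely to the topological check that the complement has no bounded components.
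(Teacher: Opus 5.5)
There is a genuine gap at the singular points of $C=\bigcup_i C_i$ (cusps and self-crossings of each $C_i$, and the points of $C_i\cap C_j$), and both of your routes fail exactly there. Step 1 is fine. The curve-hull fact you rely on in Step 3 is false for singular curves, in the direction you need. Take $C=\{zw=0\}$ and $M=\{(z,0):|z|=1\}$. The components of $C\setminus M$ are $\{(z,0):|z|>1\}$ and $\{(z,0):|z|<1\}\cup\{(0,w):w\in\bbC\}$, both unbounded, yet $\shull M$ is the closed unit disc in the $z$-axis by the maximum principle. The irreducible nodal cubic $t\mapsto(t^2-1,t^3-t)$, with $M$ the image of $|t-1|=\delta$, fails the same way, because both pieces of $C\setminus M$ contain the node.

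The correct statement lives on the normalization: the hull is the image of $\nu^{-1}(M)$ together with the relatively compact components of its complement. Your topological check (each $\bbC_i$ minus a disc and finitely many points is connected and unbounded) is the right check on the normalization. What you are missing is the result that lets you test convexity after pulling back along the proper map $\coprod_i\bbC_i\to\bbC^2$. That is exactly the Mihalache theorem the paper cites, and the paper's proof consists of that citation plus your check.

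Step 2 does not supply this either. Its key claim is false: if $f_i(a)=f_i(b)$ with $a\neq b$, every element of $A_i$ takes equal values at $a$ and $b$, so the $A_i$-hull of $\{a\}$ contains $b$. Also, every conductor element is a function on $C_i$ vanishing at its singular points, so it vanishes at every preimage of a singular point. Hence no conductor element with $t(w)\neq 0$ exists there, and no limit argument rescues $t q^N$.

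What the conductor does give you is the smooth points. Suppose $x\in C\setminus K$ is smooth on $C$. Then it lies only on some $C_j$, with $x=f_j(w)$ and $|w|>r_j$. Pick $t$ in the conductor with $t(w)\neq 0$, write $t\,z^N=Q_N\circ f_j$ with $Q_N$ a polynomial on $\bbC^2$, and let $g$ be a polynomial vanishing on the other $C_i$ with $g(x)\neq 0$. Then $gQ_N$ separates $x$ from $K$ for large $N$. This shows $\shull K\setminus K$ lies in a finite set $S_0$ of singular points.

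You can then finish with Oka--Weil. $\shull K$ is polynomially convex and is the disjoint union of the compact sets $K$ and $\shull K\cap S_0$. Take a polynomial within $1/3$ on $\shull K$ of the function that is $0$ near $K$ and $1$ near $\shull K\cap S_0$. It would violate the hull inequality at any point of $\shull K\cap S_0$, so that set is empty. With this patch your argument becomes a self-contained alternative to the paper's citation; without it, the singular points are not handled.
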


\begin{proof}
    Let
    \[ f \colon \coprod_i \bbC_i \to \bbC^2 \]
    be the coproduct of the maps $f_i$. A nonconstant polynomial map $\bbC \to \bbC^2$ is always proper because its leading term dominates, and it follows that $f$ is proper as well. By \cite[Theorem~3.1]{MihalacheRungeSteinSpaces}, it follows that $K$ is polynomially convex if and only if $f^{-1}(K)$ is. Since the $f_i$ are generically injective and their images only have finite overlap, we have
    \[ f^{-1}(K) = \coprod_i \ovl{D}_{r_i} \cup F, \]
    where $F$ is a finite set. If $z \in \bbC_i \setminus f^{-1}(K)$, we can separate it from $f^{-1}(K)$ using a function
    \[ \prod_{a \in F \cap \bbC_i} (z - a) z^n, \]
    as long as $n$ is sufficiently large. \qedhere
\end{proof}

Note that the pairwise distinctness hypothesis will always be satisfied for us. It only fails for a linear curve, translated along its tangent direction.

The consequence we will use in the construction is the following.

\begin{lemma}\label{lem:DoubleExtension}
    Let $p, q \colon \bbC \to \bbC^2$ be birational polynomial maps and let $R > r > 0$. Let $E \subset \bbC^2$ be finite; suppose that
    \[ p(\ovl D_r) \cap (q(\ovl D_r) + E) = \emptyset,  \]
    and that the translates of $q(\bbC)$ by $E$ are pairwise distinct. Then for all $\varepsilon > 0$, there exist birational polynomial maps $P, Q$ with
    \[ \norm{P - p}_r < \varepsilon, \qquad \norm{Q - q}_r < \varepsilon, \qquad P(\ovl{D}_R) \cap (Q(\ovl{D}_R)+ E) = \varnothing. \]
\end{lemma}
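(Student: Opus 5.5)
The plan is to extend the two maps one at a time, each time invoking \cref{cor:KusakabeForUs} with a polynomially convex obstacle supplied by \cref{lem:PolyConvexity}.

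\textbf{Step 1: extend $p$ while $q$ stays on $\ovl D_r$.} Consider the compact set
\[ L_1 = \bigcup_{e \in E} \bigl(q(\ovl D_r) + e\bigr). \]
Each translate $q + e$ is a birational polynomial map, and by hypothesis their images are pairwise distinct. So \cref{lem:PolyConvexity} shows that $L_1$ is polynomially convex. The set $p(\ovl D_r)$ is compact and disjoint from $L_1$, so by continuity $p$ maps a neighborhood $U$ of $\ovl D_r$ into $\bbC^2 \setminus L_1$. \cref{cor:KusakabeForUs} then gives a polynomial map $P_0$ with
\[ \norm{P_0 - p}_r < \varepsilon/2, \qquad P_0(\ovl D_R) \cap L_1 = \emptyset. \]
Since $P_0(\ovl D_R)$ and $L_1$ are compact and disjoint, they are at positive distance. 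We may therefore perturb $P_0$ by the trick $(+\delta z^n, +\delta z^{n+1})$ to get a birational map $P$ that keeps both properties. We choose $n$ also so that $P(\bbC)$ differs from every $q(\bbC) + e - e'$ (see Step 2). Avoiding finitely many specific curves only requires $\delta$ generic, and increasing $n$ if needed handles this.

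\textbf{Step 2: extend $q$ with the obstacle now $P(\ovl D_R)$.} The condition $Q(\ovl D_R) \cap (P(\ovl D_R) - e) = \emptyset$ for all $e \in E$ says that $Q$ must avoid
\[ L_2 = \bigcup_{e \in E} \bigl(P(\ovl D_R) - e\bigr). \]
The maps $P - e$ are birational. Their images are pairwise distinct provided $P(\bbC)$ is not invariant under any nonzero difference $e - e'$. I expect this invariance property to be a consequence of the genericity built into $P$ (an invariant curve must be a line in that direction, as the paper remarks), and if necessary it can be imposed in Step 1. Then \cref{lem:PolyConvexity} makes $L_2$ polynomially convex.

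By Step 1, $q(\ovl D_r)$ is disjoint from $L_2$. \cref{cor:KusakabeForUs} applied to $q$ with obstacle $L_2$ gives a polynomial $Q_0$ with
\[ \norm{Q_0 - q}_r < \varepsilon/2, \qquad Q_0(\ovl D_R) \cap L_2 = \emptyset. \]
A small birational perturbation $Q$ of $Q_0$, as in Step 1, preserves both properties. The conclusion $P(\ovl D_R) \cap (Q(\ovl D_R) + E) = \emptyset$ then holds by construction, and $P$ is unchanged in this step.

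\textbf{Main obstacle.} The main obstacle is verifying the distinct-images hypothesis of \cref{lem:PolyConvexity} for $L_2$, namely that $P(\bbC)$ is not translation-invariant by any nonzero $e - e'$. My plan is to argue that a polynomial curve invariant under a nonzero translation $v$ contains the full orbit $x + \bbZ v$. This orbit is unbounded, and Zariski density arguments then force the curve to be a line parallel to $v$. A birational perturbation with terms $\delta z^n$ and $\delta z^{n+1}$ for $n \ge 2$ is never a line, so this hypothesis comes for free.
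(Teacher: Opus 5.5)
Your proposal is correct and follows the paper's proof: extend $p$ first against the polynomially convex obstacle $q(\ovl D_r)+E$, perturb to a birational, nonlinear $P$, then extend $q$ against $P(\ovl D_R)-E$. The paper likewise secures distinct translates just by taking $P$ nonlinear, and your Zariski-closure argument that a translation-invariant curve is a line fills in that step; the extra requirement in your Step 1 that $P(\bbC)$ differ from the curves $q(\bbC)+e-e'$ is unnecessary.
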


\begin{proof}
    By \cref{lem:PolyConvexity}, the set $L := (q(\ovl D_r) + E)$ is polynomially convex. Applying \cref{cor:KusakabeForUs} once, we obtain a map $P$ satisfying $\norm{P - p}_r < \varepsilon$, and
    \[ P(\ovl D_R) \cap (q(\ovl D_r) + E) = \emptyset. \]
    Since birational maps are dense, we can also assume that $P$ is birational, and nonlinear. The set $L = P(\ovl D_R) - E$ is likewise polynomially convex. Applying \cref{cor:KusakabeForUs} once again, we obtain the desired birational map $Q$. \qedhere
\end{proof}

\section{Completing the proof}\label{sec:CompleteProof}

By induction, we build a sequence $(A_i, B_i, E_i)$, where $A_i, B_i \colon \bbC \to \bbC^2$ are birational polynomial maps, and $E_i \subset \bbC$ is a finite set of size $2^i$. Later we will take the curves $A, B$ to be the limits of $(A_i)$ and $(B_i)$, and conclude as in the introduction. For our base case, we take
\[ A_0(z) = (z, 0), \qquad B_0(z) = (0, z), \qquad E_0 = \{2 \}. \]
Let $\vec e = (1,0)$. At each stage $i \geq 0$ we require that
\begin{equation}
    A_i(\ovl D_{2^i}) \cap (E_i \cdot \vec e + B_i(\ovl D_{2^i})) = \emptyset. \label{eq:inductivehypo}
\end{equation}

Applying \cref{lem:DoubleExtension}, we obtain birational maps $A_{i+1}, B_{i+1}$ which satisfy \eqref{eq:inductivehypo} with expanded radius $2^{i+1}$. Since disjoint compact sets have positive separation, there is some $\rho_i > 0$ such that
\[ A_{i+1}(\ovl D_{2^{i+1}}) \cap \left( (\ovl D_{\rho_i} + E_i) \cdot \vec e + B_{i+1}(\ovl D_{2^{i+1}}) \right) = \emptyset. \]
Let us write
\[ K_i = \ovl{D}_{\rho_i} + E_i. \]
Choose $\rho_i$ small enough that $K_i$ is a disjoint union of $2^i$ closed discs, and that $K_i \subset K_{i-1}$ for $i \geq 1$. Let $E_{i+1}$ be a collection of $2^{i+1}$ distinct points from $K_i^{\circ}$, two from each component. Then the inductive hypothesis is satisfied and the induction continues. The set
\[ K := \bigcap_i K_i \]
is compact and uncountable, because its points correspond to paths in an infinite binary tree.

At each stage in the induction, we are forced to allow an error $\varepsilon_i$ such that
\[ \norm{A_{i+1} - A_i}_{2^i} < \varepsilon_i, \qquad \norm{B_{i+1} - B_i}_{2^i} < \varepsilon_i. \]
We can choose $\varepsilon_i$ arbitrarily small, and the final step is to optimize these errors. First, we require that
\[ \sum_{k=0}^{\infty} \varepsilon_k \]
be finite, which implies that $(A_i), (B_i)$ converge locally uniformly to holomorphic maps
\[ A, B \colon \bbC \to \bbC^2. \]
Bounding the same sum, we can force $A(0), B(0)$ to be as close to $0$ as desired. By the Cauchy estimates, a bound also forces their derivatives on $\ovl D_{1/2}$ to be uniformly close to $(1,0)$ and $(0,1)$ respectively. It follows that $A(\bbC), B(\bbC)$ possess a transverse intersection near zero. Finally, let us define the distances
\[ \delta_i = d(A_{i+1}(\ovl D_{2^{i+1}}), K_i \cdot \vec e + B_{i+1}(\ovl D_{2^{i+1}})) > 0, \]
which depend only on $\varepsilon_0, \ldots, \varepsilon_i$. We require, for all $i \geq 0$, that
\[  \sum_{k = i+1}^{\infty} \varepsilon_k < \delta_i / 2. \]
The left-hand side bounds the total error $\norm{A_{i+1} - A}_{2^{i+1}}$ on $\ovl D_{2^{i+1}}$, and likewise for $B$. Thus, we have
\[ A(\ovl D_{2^{i+1}}) \cap (K_i \cdot \vec e +B(\ovl D_{2^{i+1}})) = \emptyset. \]
If $z \in K$, then we can let $i \to \infty$ and conclude that
\[ A(\bbC) \cap (z \cdot \vec e + B(\bbC)) = \emptyset. \]
Finally, as in the introduction let us define
\[ X = \{z \in \bbC: A(\bbC) \cap (z \cdot \vec e + B(\bbC)) \neq \emptyset \}. \]
The set $X$ contains a nonempty neighborhood of zero since $A(\bbC)$, $B(\bbC)$ meet transversely. By construction, it omits the uncountable set $K$. Thus, the proof of \cref{thm:Counterexample} is complete. \qed

\printbibliography

\end{document}